\title{Convex bounds for last passage percolation with dependent identically distributed weights}
\date{April 12, 2026}

\documentclass{article}
\usepackage{graphicx}
\usepackage{subfig}
\usepackage{multirow}
\usepackage{amsfonts}
\usepackage{xcolor}
\usepackage{amsmath}
\usepackage{tikz}

\newtheorem{theorem}{Theorem}

\begin{document}

\author {
Isaac Meilijson \\
{\em School of Mathematical Sciences} \\
{\em Tel Aviv University, Tel Aviv, Israel} \\
{\em E-mail: \tt{isaco@tauex.tau.ac.il}} \\
\\
}
\maketitle

\pagenumbering{arabic}

\begin{abstract}

On the $Z^2$ lattice, vertices are assigned random weights $W(i,j)$. The point-to-point last passage percolation (LPP) time $S_{M,N+1-M}$ between $(1,1)$ and $(M,N+1-M)$ is the maximum total weight among all upward/right-oriented paths connecting the two. Point-to-line LPP time $R_N$ is the maximum of these maximal total weights over $M$. Asymptotic distributions and fluctuations of these LPP times have been studied for i.i.d. weights. The current study deals with identically distributed but not necessarily independent weights, and maximizes LPP times in the sense of increasing convex dominance. In particular, maximal expected LPP times are identified, in the class of all weight couplings with a given marginal distribution. For the case of mean-$1$ exponentially distributed weights, there is a coupling for which $R_N$ is the shifted exponential variable $R_N^* = N W(1,1) + \log(N!)$, such that $E[\Psi(R_N)] \le E[\Psi(R_N^*)]$ for all couplings and all convex non-decreasing functions $\Psi$ for which these expectations are well defined. In contrast to ${{R_N^*} \over N}= W(1,1)+{{\log(N!)} \over N}$, with variance $1$ and mean diverging to $\infty$ like $\log(N)$, ${{R_N} \over N}$ converges a.s. to $2$ for the commonly studied i.i.d. weights. As for {\em small} LPP, expected LPP time is at least $NE[W(1,1)]$, attained by assigning to each anti-diagonal identical weights. The minimal possible variance of $R_N$ is asymptotically zero for exponential weights.

\end{abstract}

\section{Introduction} \label{techintro}

On the $Z^2$ lattice, vertices are assigned random weights $W(i,j)$ with a common distribution $F$. The point-to-point last passage percolation (LPP) time $S_{M,N+1-M}$ between $(1,1)$ and $(M,N+1-M)$ is the maximum total weight among all upward/right-oriented paths connecting the two. Point-to-line LPP time $R_N$ is the maximum of these maximal total weights over $M$.

\medskip

The asymptotic behavior (as $N \rightarrow \infty$) of ${R_N \over N}$ and of ${{S_{N,M}} \over {N+M}}$ (keeping $\gamma={N \over {N+M}}$ fixed) has been studied for the i.i.d. model. These variables have a.s. constant limits, as proved by means of Kingman's sub-additive ergodic theorem (see the survey by Blair-Stahn \cite{blairstahn}).
Explicit answers have been provided for exponential and geometric $F$. Among these, for $Exp(1)$-distributed weights:
Johansson \cite{Johansson} identified the asymptotic Tracy-Widom distribution of point-to-point normalized LPP ${(S_{N,M}-2N)/N^{1 \over 3}}$. Within their work on the Kardar-Parisi-Zhang (KPZ) universality class, Borodin et. al. \cite{Borodinetal} and Sasamoto \cite{Sasamoto} identified the asymptotic Tracy-Widom distribution of
the point-to-line normalized LPP ${(R_N-2N)/N^{1 \over 3}}$.  The limiting shape function of ${{S_{N,M}} \over {N+M}}$ (as a function of $\gamma$) for point-to-point LPP was discovered by R\"ost \cite{Rost}.

\medskip

The current study deals with identically distributed but not necessarily independent weights, and maximizes LPP times in the sense of increasing convex dominance. Joint distributions (or {\em couplings}) of the weights $W(i,j)$ will be introduced, with identical $F$ marginals, such that their LPP times $R_N=R_N^*$ and $S_{N,M}=S_{N,M}^*$ convexly dominate the corresponding LPP times for any coupling.
In particular, maximal expected LPP times are identified, in the class of all weight couplings with a given marginal distribution.

For $F$ exponential, Pareto and uniform, the worst case point-to-point and point-to-line LPP times $R_N^*$ and $S_{N,M}^*$ are linear functions of $W(1,1)$, for every $N$ and $M$. E.g., $R_N^*=N W(1,1) + \log(N!)$ for $Exp(1)$ $F$.

\section{Preliminaries} \label{techprelim}

{\bf A menu of oriented $N$-partite graphs on ${\mathbb Z}^2$}. For $N \ge 1$, $C_N$ is the graph on ${\mathbb Z}^2$ with ${{N(N+1)} \over 2}$ vertices, the sites $(i,j)$, for $1 \le i,j \le N$ and $i+j \le N+1$, forming a triangle. This triangle is partitioned naturally by the values of $i+j$ into $N$ disjoint {\em anti-diagonals}. All graphs to be considered are oriented $N$-partite graphs in which only vertices in adjacent anti-diagonals are connected, with edges going only from an anti-diagonal to the anti-diagonal with one more vertex. $C_N$ is the {\em complete} $N$-partite graph. Its {\em point-to-line} sub-graph $L_N$ (L for {\em line}) has $(i,j)$ on the $i+j-1$ anti-diagonal connected only to the two ("up" and "right") vertices $(i,j+1)$ and $(i+1,j)$ on the $i+j$ anti-diagonal. The {\em point-to-point rectangle} sub-graphs $P_{N,M}$ (P for {\em point}) consist of all the paths of $L_N$ or $C_N$ (restricted to the pertinent rectangle) with a single vertex on the $N$'th anti-diagonal, the vertex $(N+1-M,M)$.
The {\em point-to-point square} two sub-graphs $P_{N,{{N+1} \over 2}}$ (upward/right or complete) are well defined for $N$ odd.
Figure \ref{twopaths} displays a path in $C_8$ in green and a path in $L_8$ or in $P_{8,5}$ in red.

The subject can be generalized to ${\mathbb Z}^d$, where the role of right-angle triangles and rectangles is played by orthogons (simplices and orthotopes). For simplicity, focus is restricted to ${\mathbb Z}^2$.

\medskip

\begin{figure}[hbt!]
\caption{Oriented $N$-partite graphs. The red path is in $L_8$, the green path is in $C_8$.}
\begin{center}
\begin{tikzpicture} \label{twopaths}
\draw[gray, very thick] (1,1) -- (8,1);
\draw[gray, very thick] (1,1) -- (1,8);
\draw[gray] (2,1) -- (2,7);
\draw[gray] (3,1) -- (3,6);
\draw[gray] (4,1) -- (4,5);
\draw[gray] (5,1) -- (5,4);
\draw[gray] (6,1) -- (6,3);
\draw[gray] (7,1) -- (7,2);
\draw[gray] (1,8) -- (8,1);
\draw[gray] (1,2) -- (7,2);
\draw[gray] (1,3) -- (6,3);
\draw[gray] (1,4) -- (5,4);
\draw[gray] (1,5) -- (4,5);
\draw[gray] (1,6) -- (3,6);
\draw[gray] (1,7) -- (2,7);
\draw[gray] (1,8) -- (8,1);
\draw[red, very thick] (1,1) -- (1,2);
\draw[red, very thick] (1,2) -- (2,2);
\draw[red, very thick] (2,2) -- (2,3);
\draw[red, very thick] (2,3) -- (2,4);
\draw[red, very thick] (2,4) -- (3,4);
\draw[red, very thick] (3,4) -- (4,4);
\draw[red, very thick] (4,4) -- (4,5);
\draw[green, very thick] (1,1) -- (2,1);
\draw[green, very thick] (2,1) -- (1,3);
\draw[green, very thick] (1,3) -- (3,2);
\draw[green, very thick] (3,2) -- (5,1);
\draw[green, very thick] (5,1) -- (3,4);
\draw[green, very thick] (3,4) -- (4,4);
\draw[green, very thick] (4,4) -- (5,4);
\filldraw[black] (0,1) circle (0pt) node[anchor=west]{$(1,1)$};
\filldraw[black] (5,4) circle (0pt) node[anchor=west]{$(5,4)$};
\filldraw[black] (4,5) circle (0pt) node[anchor=west]{$(4,5)$};
\end{tikzpicture}
\end{center}
\end{figure}

\medskip

\noindent {\bf Last passage percolation in ${\mathbb Z}^2$}. Each vertex of $C_N$ is assigned a non-negative random variable. These {\em weights} $W(i,j)$ (canonically, $W$) are identically $F$-distributed "times", and the commonly studied {\em i.i.d. model} makes them independent and identically distributed. The {\em convex bound model} makes them identically distributed but dependent, in a manner and purpose to be described.

For each path $\eta$ in $C_N$, let $\sigma_\eta$ be the sum of the $N$ weights $W(i,j)$ along $\eta$. For each of the graphs $C_N$, $L_N$ and $P_{N,M}$, the {\em last passage percolation} (LPP) time
is the maximum of the sums $\sigma_\eta$ over all paths $\eta$ on the corresponding graph, to be denoted by $R_N$ for point-to-line percolation on $L_N$ and $S_{N,M}$ for point-to-point percolation on $P_{N,M}$. LPP times on $C_N$ will be appended the letter $C$ in the subscript, and the convex bound coupling will be indicated by a superscript star ($*$).

\medskip

\noindent {\bf Stochastic and convex dominance} The distribution $F_V$ on ${\cal R}$ of a r.v. $V$ dominates another $F_W$ of $W$ stochastically if $F_V(x) \le F_W(x)$ pointwise. Equivalently, if $E[\Psi(V)] \ge E[\Psi(W)]$ for all non-decreasing $\Psi$ for which these expectations exist. Under a weaker type of dominance, $F_V$ dominates $F_W$ (both in ${\cal L}_1)$ in the increasing convex order (Shaked and Shanthikumar \cite{Shaked}) if $E[(V-x)^+]=\int_x^\infty (1-F_V(t))dt \ge \int_x^\infty (1-F_W(t))dt=E[(W-x)^+]$ pointwise. Equivalently, if $E[\Psi(V)] \ge E[\Psi(W)]$ for all non-decreasing, convex $\Psi$ for which these expectations exist. Henceforth, {\em increasing} will be tacit.

\medskip

\noindent {\bf Convexly maximal LPP times}. With this terminology, (worst case) couplings will be found such that their LPP times convexly dominate those of any other coupling, e.g. the i.i.d. model. In particular, the maximal values $E[R_N^*], E[S_{N,M}^*], E[R_{C,N}^*], E[S_{C,N,M}^*]$ will be identified. While the LPP times on $C_N$ and $L_N$ for the i.i.d. model grow with $N$ at different rates, it will be seen that $R_N^*$ and $R_{C,N}^*$ are identically distributed, as are $S_{N,M}^*$ and $S_{C,N,M}^*$.

Although LPP on $C_N$ has not been considered of interest, it is presented because its convex bounds are easier to identify and provide a benchmark against which to compare the LPP times on the graphs of interest: Since $R_{C,N} \ge R_N$ a.s., if the distribution of $R_{C,N}^*$ is identified and there is a coupling on $L_N$ with $R_N$ distributed like $R_{C,N}^*$, then $R_{C,N}^*$ is the convexly maximal point-to-line LPP time as well.

The distributions of the various convex bound LPP times will be presented in nearly closed form for each $F$, $N$ and $(N,M)$, and in actual closed form for exponential, uniform and Pareto $F$.

In particular, it will be proved for point-to-line LPP with $Exp(1)$-distributed weights, that ${R_N^* \over N}$ and ${R_{C,N}^* \over N}$ are distributed like the shifted exponential variable $W+{{\log(N!)} \over N}=W - 1 + \log(N) + o(1)$. The point-to-point counterparts (with $M \le {{N+1} \over 2}$) are distributed as the shifted exponential variables $W+2 {\log((M-1)!) \over N} + (1-2 {{{M-1}} \over N})\log(M)=W + \log(N) + \log(\gamma) - 2 \gamma$ + o(1). This asymptotic unimodal symmetric shape function $\log(\min(\gamma,1-\gamma)) - 2 \min(\gamma,1-\gamma)$ (with minimal value $-\infty$ and maximal value $-1-\log(2)$) differs from the asymptotic unimodal symmetric shape function $(\sqrt{\gamma}+\sqrt{1-\gamma})^2$ of the i.i.d. model by R\"ost \cite{Rost} (with maximal value twice its minimal value).
Unlike the i.i.d. model for $L_N$ for which the normalized LPP time is asymptotically constant, the convex bound normalized LPP time stays random and grows to $\infty$ like $\log(N)$. The i.i.d. model for $C_N$ has normalized LPP with mean growing like $\log(N)$ as well.

It may be of interest to notice that the point-to-line mean normalized convex bound exceeds asymptotically by $\log(2)$ the point-to-point central (and maximal) value, while the two are equal for the i.i.d. case.

\medskip

\noindent {\bf Remark on physical viability}. The weight couplings presented in this study do not make "real-world" sense and are not offered as an alternative physically viable model. They are a means to identify sharp upper bounds on expected LPP times if weights are identically distributed but not necessarily independent. The same applies to the brief treatment in Section \ref{convexminimum} of couplings achieving lower bounds, via completely mixable distributions. These bounds are not approximations to the i.i.d. case. Rather, they provide a warning that LPP can be very different if the independence assumption is relaxed.

\medskip

\noindent{\bf Organization}. Section \ref{convmax} describes various representations of probability distributions and stochastic dominance relations, with emphasis on LPP-type maxima of partial sums.
Section \ref{completeLPP} presents the convex upper bound analysis of LPP on the complete $N$-partite graph $C_N$ and illustrates the analysis on weight distributions that are "memoryless up to scaling". Section \ref{flatRW} introduces a notion of "flat" random walk as a general random environment that constructs couplings whose LPP times have the same distribution as LPP on $C_N$. Subsection \ref{flatisunif} shows that LPP on $L_N$ admits a flat random walk representation, via a P\'{o}lya-urn-scheme construction. Somewhat off this organizational order, Section \ref{convexminimum} complements the rest of the paper by analyzing lower bounds to LPP.

\section{Convex majorization} \label{convmax}

\subsection{Characterizations of a distribution} \label{charactdist}

Attention will be restricted to continuous cumulative distribution functions (cdf) $F$ with finite mean, strictly increasing from the essential infimum $0$ to the essential supremum, finite or $\infty$.
The cdf $F(x)=P(T \le x)$ of the r.v. $T$ leads to the survival function $F^*(x)=1-F(x)$, premium function $H_F(x)=E[(T-x)^+]= \int_x^\infty F^*(t)dt$, mean residual function $g_F(x)= E[T-x|T>x]={{H_F(x)} \over {F^*(x)}}$ and upper-quantile function
\begin{equation}  \label{upperquanyt}
Q_F(u)=(F^*)^{-1}(u)=\inf \{x | F(x)>1-u\} \ , \ 0 < u < 1 \ .
\end{equation}
The essential infimum $Q_F(1)=\inf \{x | F(x)>0\}$ and essential supremum $Q_F(0)=\sup \{x | F(x)<1\}$ are limiting values. If $U \sim U(0,1)$, then $Q_F(U) \sim F$. Each of the functions $F, F^*, H_F, g_F$ and $Q_F$ fully characterizes the distribution of $T$. For $v \in (Q_F(1), Q_F(0))$, let $F_L^{(v)}(t)= \min(1,{{F(t)} \over {F(v)}})$ stand for the cdf of the conditional distribution of $T$ given that $T<v$ and $F_R^{(v)}(t)=\max(0,{{F(t)-F(v)} \over {1-F(v)}})$ for the cdf of the conditional distribution of $T$ given that $T>v$. For $n \ge 1$, $F_L^{(n)}(t)$ and $F_R^{(n)}(t)$ will stand for $F_L^{(w_n)}(t)$ and $F_R^{(w_n)}(t)$, where $w_n=Q_F({1 \over n})$ is the ${1 \over n}$-upper-quantile of $F$. In simpler terms, $1-F_R^{(n)}(t)=\min(1,n(1-F(t)))$.

\subsection{The concepts of stochastic and convex majorization} \label{conceptsmajor}

$V \sim F_V$ is {\em stochastically bigger} than $W \sim F_W$ if $E[\Psi(V)] \ge E[\Psi(W)]$ for all $\Psi$ non-decreasing for which these expectations exist. Equivalently, if $P(V > x) = E[I_{(x,\infty)}(V)] \ge E[I_{(x,\infty)}(W)] = P(W > x)$ for all $x$. Equivalently, if in some probability space there are r.v. $(V,W)$ with the given marginals such that $V \ge W$ a.s.

For integrable r.v.'s, $V \sim F_V$ is {\em convexly bigger} than $W \sim F_W$ if $E[\Psi(V)] \ge E[\Psi(W)]$ for all $\Psi$ non-decreasing and convex for which these expectations exist. Equivalently, if for all $x$, $E[(V-x)^+] = \int_x^\infty P(V > t) dt \ge \int_x^\infty P(W > t) dt = E[(W-x)^+]$. Equivalently, if in some probability space there is a submartingale pair $(W,V)$ with the given marginals. I.e., such that $E[V|W] \ge W$ a.s.

Stochastic and convex inequality are relations between the distributions of r.v.

\subsection{Stochastic majorization of the maximum of r.v.'s with given marginals: maximally dependent couplings and the Fr\'{e}chet bound} \label{Frechet}

Let $X_i \sim F_i \ , 1 \le i \le n$. A {\em coupling} is a joint distribution with the prescribed marginal distributions. For every coupling, $P(\max_{i=1}^n X_i > x)=P(\bigcup_{i=1}^n \{X_i>x\})) \le \sum_{i=1}^n F_i^*(x)$. Hence (the Fr\'{e}chet bound),
\begin{equation} \label{Frechetbound}
P(\max_{1 \le i \le n} X_i > x) \le \min(1,\sum_{i=1}^n F_i^*(x)) = H^*(x) \ .
\end{equation}

Let $x_0=\inf \{x | \sum_{i=1}^n F_i^*(x) <1\}$. Then $\sum_{i=1}^n F_i^*(x_0) = 1$. The following class of joint distributions achieve equality in inequality (\ref{Frechetbound}): Sample once from the set $\{1,2,\dots,n\}$ with respective probabilities $(F_1^*(x_0),F_2^*(x_0),\dots,F_n^*(x_0))$. For outcome $i$, sample $X_i^*$ from the conditional \linebreak $F_i$-distribution on $[x_0, \infty)$ and sample each other $X_j^*$ from the conditional $F_j$-distribution on $(-\infty, x_0)$. It should be clear that for each $i$, $X_i^* \sim F_i$, so these joint distributions, termed {\em maximally dependent} in the literature, have the given marginals. For each $x \ge x_0$, the events $\{X_i^*>x\}$ are mutually disjoint, turning inequality (\ref{Frechetbound}) into an equality.

Summarizing, for each coupling, $P(\max_{i=1}^n X_i>x) \le H^*(x) = P(\max_{i=1}^n X_i^*>x)$. In particular, if the $X_i$ are $F$-distributed, then $x_0=(F^*)^{-1}({1 \over n})$ and the distribution function $H$ has essential infimum $x_0$ and survival function $H^*(x)=nF^*(x)$ for $x \ge x_0$. Letting $U \sim U(0,1)$, $Y_n=(F^*)^{-1}({U \over n}) \sim H$.

\medskip

This describes how is the maximal weight along an anti-diagonal made stochastically maximal.
As an example, if the $X_i$ are identically $Exp(1)$ distributed, then $x_0=\log(n)$ and $H$ is the distribution of the shifted exponential variable $X_1 + \log(n)$, with mean $1+\log(n)$. For the independent coupling, the mean is (in terms of Euler's constant) $\sum_{i=1}^n {1 \over i} \approx 0.5772 + \log(n)$.

\subsection{Convex majorization of the sum of r.v.'s with given marginals: \linebreak Co-monotone coupling} \label{sumssub}

Let $Y_i \sim H_i \ , 1 \le i \le n$ have finite means. By means of one observation $U \sim U(0,1)$, define $Y_i^*=(H_i^*)^{-1}(U)$ for each $i$. This is the co-monotone coupling. Whatever be the joint distribution of the $Y_i$ with the given marginals, $\sum_{i=1}^n Y_i^*$ is convexly bigger than $\sum_{i=1}^n Y_i$.

A proof is contained within the proof of Theorem 2 in Meilijson \& N\'{a}das \cite{MeilijsonNadas}.

If the $H_i$ are all equal, all $Y_i^*$ are identical a.s. and $n Y_1$ convexly dominates $\sum Y_i$ for every coupling. Since $\sum_{i=1}^n E[Y_i]$ is the same for all couplings, a convexly larger sum is {\em more dispersed} rather than {\em bigger}.

\subsection{Convex majorization of maxima of partial sums - The method of Meilijson \& N\'{a}das \cite{MeilijsonNadas}} \label{maxsub}

This subsection generalizes the previous two, and places LPP-type convex majorization in a broader context. It provides a (tacit) direct proof that $R_N^*$ convexly maximizes $R_N$, without the fortuitous Deus Ex Machina solution via LPP on the complete $N$-partite graph.

\medskip

$X_n \sim F_n$, for $1 \le n \le N$, are non-negative r.v. with some joint distribution with the given marginal distributions. $I_k$, for $1 \le k \le K$, are non-empty subsets of $\{1,2,\dots,N\}$. $S_k=\sum_{n \in I_k} X_n$ are partial sums, and ${\cal M}=\max_{k=1}^K S_k$ is the maximal partial sum, such as LPP. Subsection \ref{Frechet} (the "parallel" machine) dealt with singleton subsets, and Subsection \ref{sumssub} (the "series" machine) dealt with a single subset, the set itself. LPP on $C_N$ is a machine built by nesting series and parallel sub-machines.

A convex bound for the distribution of ${\cal M}$ will be found by maximizing over the joint distributions the premium function $E[({\cal M}-x)^+]$, for each $x$. Take an arbitrary joint distribution, fix an arbitrary vector $(v_1,v_2,\dots,v_N)$ and express for each $k$,
$$
S_k-x= \sum_{n \in I_k} (X_n-v_n) + \sum_{n \in I_k} v_n - x \ .
$$
Increase the RHS as indicated to obtain
$$
S_k-x \le \sum_{n \in I_k} (X_n-v_n)^+ + (\max_k \sum_{n \in I_k} v_n -x)^+ \ .
$$
Increase further the RHS as indicated to obtain
$$
S_k-x \le \sum_{n=1}^N (X_n-v_n)^+ + (\max_k \sum_{n \in I_k} v_n -x)^+ \ .
$$
The RHS in constant wrt k, so
$$
{\cal M}-x = \max_k S_k-x \le \sum_{n=1}^N (X_n-v_n)^+ + (\max_k \sum_{n \in I_k} v_n -x)^+ \ .
$$
The RHS is non-negative, so
\begin{equation} \label{pointwise}
({\cal M}-x)^+ = (\max_k S_k-x)^+ \le \sum_{n=1}^N (X_n-v_n)^+ + (\max_k \sum_{n \in I_k} v_n -x)^+ \ .
\end{equation}
This inequality holds a.s., so, taking expectations,
\begin{equation} \label{Bfunction1}
H_{\cal M}(x)=E[({\cal M}-x)^+] \le \sum_{n=1}^N H_{X_n}(v_n) + (\max_k \sum_{n \in I_k} v_n -x)^+ \ .
\end{equation}
The LHS is constant in $v$, leading to
\begin{equation} \label{Bfunction2}
H_{\cal M}(x)=E[({\cal M}-x)^+] \le B(x)=\inf_v \{\sum_{n=1}^N H_{X_n}(v_n) + (\max_k \sum_{n \in I_k} v_n -x)^+\} \ .
\end{equation}

It should be observed that the function $B$ in (\ref{Bfunction2}) depends only on the marginal distributions $F_n$, and it bounds from above $H_{\cal M}$ for every joint distribution of the $X_n$ with the given marginals.

But (\ref{Bfunction2}) is not quite the end of the story. The function $B$ is non-negative, convex and decreasing. If $B'(x)$ is ever less than $-1$, there is a value $x_0$ of $x$ where $B'(x_0)=-1$. Keep $B$ as is on $[x_0,\infty)$ and replace $B$ on $[0,x_0]$ by the linear function with slope $-1$ with the given value at $x_0$. Then this modified $B$ is the premium function of a distribution, intended to be the distribution of ${\cal M}^*$. Its essential infimum is $x_0$. For $x \ge x_0$, optimal $v$ should make $(\max_k \sum_{n \in I_k} v_n -x)^+ =0$ and the unconstrained problem may be solved by the minimization of the separable convex function $\sum_{n=1}^N H_{X_n}(v_n)$ under K linear constraints $\sum_{n \in I_k} v_n \le x$.

The analysis in \cite{MeilijsonNadas} proceeds to show that $B(x)$ is indeed the maximal value of $H_{\cal M}(x)$, by building a joint distribution for which all inequalities in (\ref{pointwise}) hold as equalities. This analysis is done by determining the solution vector $v$ of (\ref{Bfunction2}) and the Lagrange multipliers corresponding to the linear constraints. The Lagrange multiplier for a subset is the probability that this subset is the geodesic, under the worst-case coupling.

\medskip

Couplings achieving equality in (\ref{Bfunction2}) may depend on the particular value of $x$. Hence, $B$ is a tight convex upper bound that need not correspond to a member of the family. It was proved in \cite{MeilijsonNadas} that if the graph is built by nesting series and parallel components, there is a coupling that works for all. As mentioned above, the $N$-partite graph consists of parallel machines (maximal weights in the anti diagonals) connected in series (sums of these anti-diagonal maxima).

\section{LPP on the complete $N$-partite graph} \label{completeLPP}

Convex majorization on the complete $N$-partite graph is brought first, as it is easy to solve, for point-to-line LPP as well as rectangle point-to-point LPP. Furthermore, as the remainder of this study will show (Theorem \ref{theoremflat}), {\bf it provides the tight answers for the various upward/right sub-graphs too}.

\medskip

\noindent {\bf A diversion to the i.i.d. model}. Asymptotic analysis of the i.i.d. model LPP on the complete $N$-partite graph could be performed by means of Gnedenko \cite{Gnedenko} maximal laws. Gnedenko proved that (linearly normalized) maxima of i.i.d. r.v. can only have three limit laws, the Fr\'{e}chet, Gumbel and Reversed Weibull distributions, and characterized the corresponding three domains of attraction. Thus, the Gnedenko laws model asymptotics for the maxima along the anti-diagonals, providing ingredients for a Central Limit approach to the sum of these independent maxima over the anti-diagonals. The next subsection deals with a representative from each of the three classes, but restricts analysis to the subject matter of this study, the development of convex bounds. The i.i.d. model is introduced sporadically.

\medskip

Unlike Subsection \ref{Frechet} where LPP weights $W(i,j)$ along an anti-diagonal are identically \linebreak $F$-distributed, the relevant application of the material in Subsection \ref{sumssub} to LPP involves non-identical distributions $H_i$. It should now be clear that the convexly maximal LPP time for the $N$-partite graph can be characterized in terms of a single uniform observation $U \sim U(0,1)$ as $\sum_{j=1}^N (F^*)^{-1}({U \over n_j})$, where the sum runs through the $N$ anti-diagonals and $n_j$ is the length of the $j$'th anti-diagonal. The single uniform observation $U$ is the raw material for the maxima in the anti-diagonals, and makes these maxima co-monotone over the anti-diagonals. Thus,

\begin{theorem} \label{theoremcomplete}
{\bf The convex bound model for the complete $N$-partite graph}. In terms of a single observation $U =F^*(W(1,1)) \sim U(0,1)$, the point-to-line LPP time (see (\ref{upperquanyt}))
\begin{equation} \label{cbmodelptl}
R_{C,N}^*=\sum_{j=1}^N (F^*)^{-1}({U \over j}) = \sum_{j=1}^N Q_F({U \over j})
\end{equation}
and the point-to-point LPP time (for $M \le {{N+1} \over 2}$)
\begin{equation} \label{cbmodelptp}
S_{C,N,M}^*=2 \sum_{j=1}^M Q_F({U \over j})+ (N-2M)Q_F({U \over M})
\end{equation}
corresponding to the couplings with maximally dependent weights on anti-diagonals and sum of co-monotone maxima over the anti-diagonals, convexly dominate their counterparts for any other coupling.
\end{theorem}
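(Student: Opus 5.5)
The proof combines the two earlier building blocks, the Fréchet bound of Subsection \ref{Frechet} (parallel) and the co-monotone bound of Subsection \ref{sumssub} (series), through the structure of $C_N$. The key observation is that on the complete $N$-partite graph every choice of one vertex per anti-diagonal is a path, so LPP decouples. For point-to-line, $R_{C,N}=\sum_{j=1}^N Y_j$ with $Y_j=\max$ of the $j$ weights on the $j$-th anti-diagonal. For point-to-point $P_{N,M}$ on $C_N$ restricted to the rectangle, $S_{C,N,M}=\sum_j Y_j$ with $Y_j$ the maximum over the anti-diagonal segment inside the rectangle $[1,N+1-M]\times[1,M]$. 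That segment has length $\min(j,M,N+1-j)$, which for $M\le\frac{N+1}{2}$ gives lengths $1,\dots,M$ at the start, $M,\dots,1$ at the end, and $M$ in between. Counting these gives $2\sum_{j=1}^M$ plus $N-2M$ copies of length $M$, matching (\ref{cbmodelptp}).

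First step: fix an arbitrary coupling and an increasing convex $\Psi$. Conditional arguments are not needed; it suffices to bound premium functions $E[(\sum_j Y_j - x)^+]$. By the Fréchet bound, each $Y_j$ is stochastically dominated by $Y_j^{(n_j)}\sim (F^*)^{-1}(U/n_j)$. Construct a coupling with $Y_j\le \tilde Y_j$ a.s. for $\tilde Y_j$ having that law, for instance by quantile transforms on a common space, separately for each $j$. Then the joint law of $(\tilde Y_j)_j$ is some coupling of the marginals $H_{n_j}$, and $\sum Y_j \le \sum \tilde Y_j$ a.s. Next, Subsection \ref{sumssub} shows that $\sum\tilde Y_j$ is convexly dominated by the co-monotone sum $\sum_j Q_F(U/n_j)$. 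Composing stochastic dominance with convex dominance, which is transitive and implied by stochastic dominance, gives the bound for every coupling.

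There is a subtlety in this step: the $\tilde Y_j$ must be built jointly so that the vector $(\tilde Y_j)$ is one random vector, while the dominating coupling is only needed in law. Because each $Y_j\le\tilde Y_j$ is realized on an enlarged probability space via conditional quantile transforms (take $\tilde Y_j=Q_{H_{n_j}}(F^*_{Y_j}(Y_j)$ randomized at atoms$)$, which is monotone in $Y_j$ and hence $\ge Y_j$), this is fine.

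Second step is attainability: exhibit the coupling. Take one $U\sim U(0,1)$. On anti-diagonal $j$ of length $n_j$, use the maximally dependent construction of Subsection \ref{Frechet} driven by $U$. Choose the index $I_j$ uniformly, independently of $U$ and across $j$, and set $W$ at $I_j$ to $Q_F(U/n_j)$. Set the other $n_j-1$ weights to independent draws from $F_L^{(n_j)}$, the law conditioned below $w_{n_j}$. Each weight is then $F$-distributed: with probability $1/n_j$ it equals $Q_F(V)$ with $V=U/n_j$ uniform on $(0,1/n_j)$, and otherwise it lies below the quantile with the correct conditional law. The anti-diagonal maximum equals $Q_F(U/n_j)$ exactly, and these maxima are co-monotone through $U$. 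Hence the LPP equals (\ref{cbmodelptl}) and (\ref{cbmodelptp}). For $n_1=1$, $U=F^*(W(1,1))$, which justifies the statement's parametrization. In the rectangle case, weights outside the rectangle are irrelevant and can be anything $F$-distributed.

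The main obstacle is the first step, where the stochastic and convex comparisons must be chained rigorously without an actual joint construction. I would avoid that by working with premium functions: $E[(\sum Y_j-x)^+]\le E[(\sum\tilde Y_j-x)^+]\le E[(\sum Q_F(U/n_j)-x)^+]$.
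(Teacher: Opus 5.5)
Your proposal is correct and follows the paper's own route. The Fréchet bound makes each anti-diagonal maximum stochastically maximal, the co-monotone coupling through a single $U$ convexly maximizes the sum of these maxima, and the bound is attained by making the weights on each anti-diagonal maximally dependent. Your explicit chaining, via an a.s.\ quantile coupling $Y_j\le\tilde Y_j$ followed by a comparison of premium functions, makes rigorous what the paper asserts with ``it should now be clear.''
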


The weight $W(1,1)$ at the origin determines
$U=F^*(W(1,1))$, which in turn determines the worst-case LPP times.

Since weights are non-negative, the power functions are increasing and convex. Hence, each moment of $R_{C,N}^*$ and $S_{C,N,M}^*$ is maximal in the class of all weight couplings.
Among all couplings with $E[R_{C,N}]=E[R_{C,N}^*]$, $R_{C,N}^*$ has maximal variance. There is a wide spectrum of such maximal-expectation couplings. Plainly, instead of using in (\ref{cbmodelptl}) and (\ref{cbmodelptp}) the same $U$ {\em between} anti-diagonals, apply any copula. As will be shown in Section \ref{convexminimum}, the minimal possible variance of these maximal-mean LPP times for exponential weights is asymptotically zero.

\medskip

Theorem \ref{theoremcomplete} will be illustrated on a menu of distributions, those that are memoryless up to scaling.

\subsection{Distributions that are memoryless up to scaling} \label{uptoscaling}

The distribution $F$ is {\em memoryless up to scaling} if $P(T>x+g(x)s|T>x)=P(T>s)$ for some function $g$. In other words,
\begin{equation} \label{memoscaling}
{{F^*(x+g(x)s)} \over {F^*(x)}}= F^*(s) \ .
\end{equation}
Equivalently, $F^*(x+g(x)s) = F^*(x) F^*(s) = F^*(s+g(t)x))$. Hence, $x+g(x)s=s+g(s)x$ or $g(x)=1+c x$. Identity (\ref{memoscaling}) takes the symmetric form $F^*(x + s + c x s)=F^*(x) F^*(s)$.

\medskip

\noindent {\bf The $\beta$ distribution, in Gnedenko's Reversed Weibull class}. For $c<0$, $F$ has support $(0,-{1 \over c}]$ and $F^*(x)=(1-|c| x)^\beta$, for arbitrary $\beta>0$, meets the memoryless condition, perhaps uniquely. This corresponds to the $\beta$ distribution with the first parameter equal to $1$. The uniform distribution is the special case where $\beta=1$.

\medskip

\noindent {\bf The exponential distribution, in Gnedenko's Gumbel class}. For $c=0$, $F$ is exponential, with arbitrary parameter $\theta>0$. This is the limit as $k \rightarrow \infty$ of the case $c<0$, letting $c=-{1 \over k}$ and $\beta = \theta k$.

\medskip

\noindent {\bf The Pareto power-law distribution, in Gnedenko's Fr\'{e}chet class}. For $c>0$, $F$ has support $(0,\infty)$, and $F^*(x)=(1+c x)^{-\alpha}$ satidfies the memoryless condition for arbitrary $\alpha>1$, so as to have finite expectation. Needless to say, the exponential case is the limit of this case as well, letting $c={1 \over k}$ and $\alpha= k \theta$. This distribution has essential infimum $0$. The name Pareto distribution is commonly given to its shift by $1$.

\begin{theorem} \label{LPPmemoryless}
For $W \sim F$ that is memoryless up to scaling, the convex bound LPP times for the complete $N$-partite graph are distributed like linear functions of $W$.
\end{theorem}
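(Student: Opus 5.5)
The plan is to substitute the functional equation characterizing memoryless-up-to-scaling distributions directly into the closed-form expressions (\ref{cbmodelptl}) and (\ref{cbmodelptp}) of Theorem \ref{theoremcomplete}. By Theorem \ref{theoremcomplete}, $R_{C,N}^*=\sum_{j=1}^N Q_F(U/j)$ and $S_{C,N,M}^*=2\sum_{j=1}^M Q_F(U/j)+(N-2M)Q_F(U/M)$, with $U=F^*(W(1,1))$. Hence $W(1,1)=Q_F(U)$, since $F$ is continuous and strictly increasing on its support. It therefore suffices to show that, for each fixed $v\in(0,1]$, the map $u\mapsto Q_F(uv)$ is an affine function of $Q_F(u)$. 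A finite sum of such affine functions is again affine in $W(1,1)$.

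The key step is to invert the symmetric identity $F^*(x+s+cxs)=F^*(x)F^*(s)$. Put $x=Q_F(u)$ and $s=Q_F(v)$, so that $F^*(x)=u$ and $F^*(s)=v$. The identity then reads $F^*(x+s+cxs)=uv$. Because $F^*$ is a continuous strictly decreasing bijection from the support onto $(0,1)$, applying $Q_F=(F^*)^{-1}$ gives
\[
Q_F(uv)=Q_F(v)+\bigl(1+c\,Q_F(v)\bigr)\,Q_F(u).
\]
Taking $v=1/j$ yields $Q_F(U/j)=a_j+b_j W(1,1)$, where $a_j=Q_F(1/j)$ and $b_j=1+c\,a_j$. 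Consequently
\[
R_{C,N}^*=\sum_{j=1}^N a_j+\Bigl(\sum_{j=1}^N b_j\Bigr)W(1,1).
\]
The point-to-point expression is likewise $2\sum_{j\le M}a_j+(N-2M)a_M+\bigl(2\sum_{j\le M}b_j+(N-2M)b_M\bigr)W(1,1)$.

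To make the result concrete, I will evaluate $a_j$ in the three families. In the exponential case, $Q_F(u)=-\theta^{-1}\log u$, so $a_j=\theta^{-1}\log j$ and $b_j=1$. For $\theta=1$ this gives $R_{C,N}^*=N\,W(1,1)+\log(N!)$, in agreement with the Introduction. In the Pareto case, $a_j=(j^{1/\alpha}-1)/c$ and $b_j=j^{1/\alpha}$. In the $\beta$ case, $a_j=(1-j^{-1/\beta})/|c|$ and $b_j=j^{-1/\beta}$.

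There is no genuine obstacle. The only point requiring care is the domain in the case $c<0$: the identity must be used only where $x+s+cxs$ lies in the support $(0,1/|c|]$. This holds because $1+cx=1-|c|x\ge 0$ on the support, so $x+s+cxs=x+s(1+cx)$ stays between $x$ and $1/|c|$. I will also note that the boundary values $u\to 0$ or $u=1$ have probability zero and can be ignored.
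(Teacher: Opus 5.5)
Your proposal is correct and is essentially the paper's proof. The paper likewise substitutes $x=Q_F(1/j)$ and $s=W(1,1)$ into the memoryless-up-to-scaling identity to get the a.s.\ relation $Q_F(U/j)=Q_F(1/j)+\bigl(1+c\,Q_F(1/j)\bigr)W(1,1)$, then sums over the anti-diagonals; your explicit coefficients for the three families and your support check for $c<0$ are correct details that the paper leaves implicit.
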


\noindent {\bf Proof}. Identify $U$ as $U=F^*(W(1,1))$ and plug in $x=(F^*)^{-1}({1 \over j})$ and $s=W(1,1)$ in (\ref{memoscaling}). This leads to the a.s. identity (see (\ref{upperquanyt}))
\begin{equation} \label{asidentity}
(F^*)^{-1}({U \over j})=(F^*)^{-1}({1 \over j}) + g((F^*)^{-1}({1 \over j})) W(1,1) = W(1,1) + Q_F({1 \over j})(1 + c W(1,1)) \ ,
\end{equation}
which shows that every summand in (\ref{cbmodelptl}) and (\ref{cbmodelptp}) is a linear function of $W(1,1)$ (exceeding $W(1,1)$). Hence, so is each of the two types of sum.

\medskip

The coefficients of these linear functions will be derived for each case separately.

\subsection{Illustration of convexly dominant LPP times} \label{illustratememoryless}

\noindent {\bf The exponential distribution}. $W \sim F$, with $f(t)=F^*(t)=H_F(t)=\exp\{-t\}$ and $g_F(t)=1$ ($c=0$). The terms to add in (\ref{cbmodelptl}) are easily identifiable in (\ref{asidentity}) as $W(1,1)+\log(j)$. Hence,
\begin{equation} \label{completeexpoptl}
R_{C,N}^* = N W(1,1) + \log(N!) \ .
\end{equation}
Similarly, the sum in (\ref{cbmodelptp}) is $2 M W(1,1)+2\log(M!)+(N-2M)(\log(M) + W(1,1))$. I.e., for $M \le {{N+1} \over 2}$,
\begin{equation} \label{completeexpoptp}
S_{C,N,M}^* = N W(1,1) + 2\log(M!)+(N-2M)\log(M) \ .
\end{equation}

For $M=1$, $S_{C,N,1}^* = N W(1,1)$ checks as it should be. For the central point-to-point value $M={N \over 2}$, $S_{C,N,{N \over 2}}^* = N W(1,1) + 2\log({N \over 2}!)$ falls behind $R_{C,N}^*$ asymptotically by $(N+1) \log(2) - {1 \over 2} \log(2 \pi N)$. Thus, the normalized LPP time, of order $\log(N)$, falls behind by $\log(2)$, as announced in Section \ref{techprelim}, just before the remark on physical viability.

The analysis of $R_{C,N}$ for the i.i.d. exponential case is made amenable by the well known fact that the maximum of $n$ i.i.d. $Exp(1)$ r.v. may be expressed as the sum of $n$ independent exponentially distributed variables, with parameters from $1$ to $n$. This fact identifies the mean of the maximum on the n'th anti-diagonal as the harmonic sum $J_n=\sum_{i=1}^n {1 \over i}$, its asymptotic variance as ${\pi^2 \over 6}$ and the asymptotic distribution of $R_{C,N}$ as Gaussian.
Thus, both $E[R_{C,N}]$ for the independent coupling and $E[R_{C,N}^*]$ are $N\log(N)$ plus smaller order terms: $E[R_{C,N}^*]$ is $N\log(N)$ plus a term of order $\log(N)$, while $E[R_{C,N}]$ for the independent coupling is $N\log(N)$ minus a term of order $N$. The variance of the normalized LPP time ${{R_{C,N}} \over {N}}$ goes to zero and the variance of the normalized LPP time ${{R_{C,N}^*} \over {N}}$ is $1$, independently of $N$.

\medskip

\noindent {\bf The uniform distribution}. $W \sim F$, with $f(t)=1, F^*(t)=1-t , H_F(t)={1 \over 2}(1-t)^2$ and $g_F(t)={{1-t} \over 2}$, on $(0,1)$. The terms to add in (\ref{cbmodelptl}) are $(F^*)^{-1}({U \over j})=1-{U \over j}$. Hence, (recalling \linebreak $J_n=\sum_{i=1}^n {1 \over i} (\approx \log(n)+0.5772)$ and) interpreting $U=1-W(1,1)$,
\begin{equation} \label{completeunifptl}
R_{C,N}^* = J_N W(1,1) + N - J_N
\end{equation}
is uniformly distributed in the interval $(N-J_N,N)$.

For the i.i.d. uniform case, the LPP time $R_{C,N}$,  with mean $\sum_{n=1}^N {n \over {n+1}}= N+1-J_{N+1}$, is asymptotically Gaussian with variance $2-{\pi^2 \over 6}=0.3551$. Details are omitted.

The sum in (\ref{cbmodelptp}) is $2(M- U J_N) + (N - 2 M)(1-{U \over M})$. I.e., for $M \le {{N+1} \over 2}$,
\begin{equation} \label{completeexpoptp}
S_{C,N,M}^* = ({N \over M}+2(J_M-1)) W(1,1) + N - ({N \over M}+2(J_M-1)) \ .
\end{equation}

For $M=1$, $S_{C,N,M}^*$ is indeed $N W(1,1)$. More generally, it is uniformly distributed in the interval $(N - ({N \over M}+2(J_M-1)),N)$. For $M \approx {N \over 2}$, the interval is $[N - 2 J_{N \over 2}, N]$, about twice as long as for point-to-line LPP. Re point-to-line LPP,
$E[R_{C,N}^*]=N-{J_N \over 2}$ can be compared with the independent coupling, yielding $E[R_{C,N}]=\sum_{i=1}^N {i \over {i+1}}=N-J_N+{N \over {N+1}}$, approximately ${{\log(N)} \over 2}$ smaller than $E[R_{C,N}^*]$, all three $N-O(\log(N))$.

\medskip

\noindent {\bf The Pareto distribution with tail parameter $\alpha>1$}. This heavy-tail distribution is supported by $(1,\infty)$, with survival function $F^*(t)= t^{-\alpha}$, whose inverse is $(F^*)^{-1}(u)=u^{-{1 \over \alpha}}$. The summands of $R_{C,N}^*$, as defined in (\ref{asidentity}), are $(F^*)^{-1}({U \over j})=W(1,1) j^{1 \over \alpha}$.
Let $K_n=\sum_{j=1}^n j^{1 \over \alpha} (\approx {\alpha \over {\alpha+1}}n^{1+{1 \over \alpha}})$.
For this family,
\begin{equation} \label{completeparetoptl}
R_{C,N}^* = K_N W(1,1).
\end{equation}
and (for $M \le {{N+1} \over 2}$)
\begin{equation} \label{completeparetoptp}
S_{C,N,M}^* = (2 K_M + (N - 2 M) M^{1 \over \alpha}) W(1,1) \ .
\end{equation}

Again, the case $M=1$ yields $N W(1,1)$. The maximal and central point-to-point value at $M={N \over 2}$ yields ${{S_{C,N,{N \over 2}}^*} \over {R_{C,N}^*}} \approx 2^{-{1 \over \alpha}}$.

\section{On small last passage times} \label{convexminimum}

This is a detour from the main theme studying how large can LPP time be, to the opposite question, how small can LPP times be. As will be seen, sharp answers can be provided for distributions that are memoryless up to scaling.

Since LPP time exceeds the total weight along any path, expected LPP time is at least $N E[W]$. By making all $W(i,j)$ along every anti-diagonal equal to each other, LPP time has mean $N E[W]$, regardless of the joint distribution of these anti-diagonal weights. In this section, the weights will be denoted by $W_1, W_2, \dots, W_N$, where $W_j$ is the identical value of the weights $W(\cdot, \cdot)$ on the $j$'th anti-diagonal.

Thus, the minimal expected LPP time in the class of all couplings of the weights $W(i,j)$ is simply $N E[W]$.
In the exponential case, this means that the i.i.d. LPP times have expected value at most twice the minimal possible value.

As for the variance of these LPP times with common anti-diagonal weights, there are distributions $F$, the {\em completely mixable} distributions (see Wang \& Wang \cite{Wang} and its references, R\"{u}schendorf \& Uckelman \cite{Ruschendorf} in particular) for which there is a coupling of $W_1, W_2, \dots W_N$, with sum a.s. constant. So, in this case, the minimal possible variance of these LPP times is zero (even if $F$ has finite mean and infinite variance). The uniform distribution and any distribution with
symmetric unimodal density are completely mixable. As an example,

\medskip

\noindent {\bf The exchangeable multivariate Gaussian case}. Let $Z_i, 1 \le i \le N$ be i.i.d. standard normal, and let $Q_N$ be their sum. Let $W_i = {{N Z_i-Q_N} \over {\sqrt{N (N-1)}}}, 1 \le i \le N$. These $W_i$ are standard normal and add up to $0$ a.s. A Gaussian copula that generates this singular multivariate Gaussian distribution is $\Phi(W_i), 1 \le i \le N$. This solution is not unique: giving up on exchangeability and correlation coefficient $\rho=-{1 \over {N-1}}$ between every pair, for $N=2 k$ even, letting $W_i$ be i.i.d. for ${i \le k}$ and letting $W_{k+i}=-W_i$ for each ${i \le k}$, will also achieve ${\mbox Var}[\sum_{i-1}^{N} W_i]=0$. For $N$ odd, apply the exchangeable trivariate Gaussian solution once and complement it with bivariate pairs.

The role played by symmetry is apparent. Minimal variance of the sum is zero for $F$ symmetric and $N$ even, by the pair-wise construction above. The case $N=3$ (and then all $N \ge 2$) for symmetric unimodal densities falls under the rubric of completely mixable distributions of Wang \& Wang \cite{Wang}.

\medskip

\noindent {\bf The $Exp(1)$ case with $N=2$}. Clearly, the minimal variance is achieved by the antithetic pair $W_1=-log(U)$ and $W_2=-log(1-U)$ with sum $W_1+W_2=-log(U(1-U))$. By numerical integration and large sample simulation, ${\mbox Var}[W_1+W_2] = 0.7101$, smaller than ${\mbox Var}[W_1] = 1$. In fact, for every $F$ with finite variance, a coupling with minimal variance of $W_1+W_2$ is $W_1=F^{-1}(U), W_2=F^{-1}(1-U)$.

\medskip

A significant difference between $N=2$ and $N>2$ is that there is a notion of antithetic copula for $N=2$ but not for any $N>2$. In contrast, the co-monotone copula with identical uniform variables and the independent copula with i.i.d. uniform variables, naturally exist for all $N$. For any distribution $F$ with finite variance, just as in the exponential case above, the minimal possible variance of the sum of two $F$-distributed r.v. is attained by the sum $F^{-1}(U)+F^{-1}(1-U)$, with $U \sim U(0,1)$, generated by the antithetic copula.

\subsection{Completely mixable distributions with bounded support.
Wang \& Wang \cite{Wang}}

For every distribution $F$ with decreasing density, supported by the finite interval $[0,b]$, with mean $\mu \in [{b \over N}, b-{b \over N}]$, there is a coupling of $N$ $F$-distributed r.v. $W_i$ such that $\sum_{i=1}^N W_i = N \mu$ a,s.

\medskip

This theorem by Wang \& Wang provides a tool to analyze the variance of the sum of $N$ identically distributed weights with distribution memoryless up to scaling. The $\beta$ distributions in Subsection \ref{uptoscaling} have bounded support and either constant or decreasing density. Hence, by the theorem of Wang \& Wang, for every $N \ge 2$ there is a coupling with constant LPP time $N E[W]$.

\medskip

The exponential and Pareto distributions, supported by right rays $[x_0, \infty)$, have decreasing density.

The plan to bound from above the minimal variance of the sum $S_N$ of $N$ weights $W_i$ that are either exponentially or Pareto distributed is as follows. Fix $b>0$. Flip a coin with $P(H)=P(W>b)$. If it falls on $H$ ($I_H=1)$, let $W_i=b+Y_i$ (exponential case) or $W_i=b Y_i$ (Pareto case), where the $Y_i$ are distributed according to the same coupling we are searching for globally. If the coin falls on $T$ ($I_H=0$), $(W_1, W_2, \dots, W_N)$ is sampled via a copula as in Wang \& Wang \cite{Wang}, with deterministic sum, from the conditional distribution of $W$ given that $W<b$. The choice $N = {b \over {E[W | W<b]}}$ works well. This procedure defines a family of distributions, parameterized by $b$, with well defined $N$ and well defined $V_N={\mbox Var} \sum_{i=1}^N W_i$ as functions of $b$. It will be proved that $V_N$ is asymptotic to $N^4 \exp\{-N\}$ in the exponential case and asymptotically proportional to $N^{4-\alpha}$ in the Pareto case, as $N \rightarrow \infty$. As a result,

\begin{theorem} \label{variancememoryless}
{\bf Variance of small LPP time for distributions that are memoryless up to scaling}. The minimal possible variance of LPP time with minimal possible mean tends to zero as $N \rightarrow \infty$ in the exponential case and in the Pareto case with finite fourth moment. In the uniform case the variance is zero for all $N>1$.
\end{theorem}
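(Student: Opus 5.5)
Throughout, the weights are constant along anti-diagonals, $W(i,j)=W_{i+j-1}$. Then every path collects exactly $W_1+\dots+W_N$, so the LPP time equals $S_N=\sum_{j=1}^N W_j$ and has the minimal mean $N E[W]$. The question therefore reduces to the minimal variance of a sum of $N$ identically $F$-distributed variables.

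\emph{Uniform case.} The uniform law on $[0,1]$ has constant (hence non-increasing) density and mean $\frac12\in[\frac1N,1-\frac1N]$ for every $N\ge2$. The theorem of Wang \& Wang gives a coupling with $S_N=N/2$ a.s., so the variance is $0$.

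\emph{Exponential and Pareto cases.} I would follow the recursive coin-flip plan just described. Fix $b$ and let $p=P(W>b)$. On heads, set $W_i=b+Y_i$ (exponential) or $W_i=bY_i$ (Pareto), where $(Y_1,\dots,Y_N)$ has the same law as $(W_1,\dots,W_N)$. On tails, draw from the conditional law $F_L^{(b)}$ using a Wang \& Wang coupling with deterministic sum $N\mu_b$, where $\mu_b=E[W\mid W<b]$.

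\textbf{Step 1: the marginals are $F$.} By memorylessness up to scaling, the law of $W$ given $W>b$ is that of $b+W$ (respectively $bW$). Hence each $W_i$ is the mixture $pF_R^{(b)}+(1-p)F_L^{(b)}=F$, provided the recursive construction has a fixed point. It does: unroll the recursion into a geometric number $G$ of heads with $P(G\ge k)=p^k$, then a single tails draw.

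\textbf{Step 2: the Wang \& Wang hypotheses.} The conditional density on $[0,b]$ is decreasing, and one needs $b/N\le\mu_b\le b-b/N$. The choice $N=b/\mu_b$ makes the left constraint an equality. The right constraint holds for large $b$, since $\mu_b\to E[W]<b$. For the theorem to be applicable for all large $N$, I would need $b\mapsto b/\mu_b$ to sweep all large integers, or else allow $N\le b/\mu_b$ and pick $b$ minimal. This is a point to check.

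\textbf{Step 3: the variance.} In the exponential case, $S_N=Nb\,G+N\mu_b$ with $G$ geometric. So
\[
\mathrm{Var}(S_N)=N^2b^2\,\frac{p}{(1-p)^2}, \qquad p=e^{-b}.
\]
With $b\approx N$ (since $\mu_b\to1$), this is $\sim N^4e^{-N}\to0$.

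In the Pareto case, the scaling composes multiplicatively. With $p=b^{-\alpha}$, the sum is $S_N=b^{G}\,N\mu_b$. Then
\[
E[S_N^2]=N^2\mu_b^2\sum_{k\ge0}(1-p)p^k b^{2k},
\]
which is finite iff $pb^2<1$, i.e.\ $\alpha>2$. The variance is of order $N^2\mu_b^2\,b^{2-\alpha}$, and with $b\asymp N$ this is $N^{4-\alpha}$. It tends to $0$ exactly when $\alpha>4$, the finite-fourth-moment condition.

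\textbf{Main obstacle.} The delicate part is the Pareto bookkeeping. One has to ensure that $b/\mu_b$ is (or can be adjusted to be) an integer with $b\asymp N$, and to justify the fixed-point coupling rigorously. I would handle integrality by choosing $N=\lfloor b/\mu_b\rfloor$. The Wang \& Wang range condition then only becomes easier to satisfy.
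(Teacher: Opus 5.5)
Your construction is the paper's: the coin with $P(H)=P(W>b)$, the shift $b+Y_i$ or scaling $bY_i$ on heads, a Wang \& Wang coupling of $F_L^{(b)}$ on tails, and $N=b/\mu_b$. Your uniform case is also the paper's. The real difference is the variance computation, and a bookkeeping error in your integrality fix.

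\textbf{Variance.} The paper conditions on $I_H$ and solves $V_N=P(W>b)V_N+N^2P(W>b)P(W\le b)\Delta^2$, where $\Delta=E[W\mid W>b]-E[W\mid W\le b]$. You instead unroll the recursion into a geometric number $G$ of heads. This also justifies the existence of the recursive coupling, which the paper takes for granted.

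In the exponential case the two formulas coincide, because $E[W]=1$ forces $\Delta=b+1-\mu_b=b/(1-p)$.

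In the Pareto case yours is the more accurate computation. Scaling by $b$ multiplies the conditional variance by $b^2$, so the paper's identity should read $V_N=pb^2V_N+p(1-p)N^2\Delta^2$, i.e.\ $V_N=p(1-p)N^2\Delta^2/(1-pb^2)$. This is where your requirement $\alpha>2$ comes from, and the order $N^{4-\alpha}$ is unchanged.

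\textbf{Integrality fix.} Your fix is backwards. The lower Wang \& Wang constraint $\mu_b\ge b/N$ is equivalent to $N\ge b/\mu_b$. It is also necessary: if one coordinate is near $b$, the other $N-1$ non-negative coordinates must sum to $N\mu_b-b$. So $N=\lfloor b/\mu_b\rfloor$, or any $N<b/\mu_b$ such as your ``pick $b$ minimal'' option, makes the constraint fail rather than easier.

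No rounding is needed. For the exponential, $b\mapsto b/\mu_b$ is continuous, tends to $2$ as $b\downarrow 0$ (the truncated law is then nearly uniform), and tends to $\infty$ as $b\to\infty$. So for every integer $N\ge 3$ there is a $b$ with $b/\mu_b=N$ exactly.

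\textbf{Pareto support.} For the Pareto law on $(1,\infty)$, the version for which $W\mid W>b$ is distributed as $bW$, the tails law lives on $[1,b]$, not $[0,b]$. The same necessity argument then gives $N(\mu_b-1)\ge b-1$. The choice $N=b/\mu_b$, which the paper also makes, violates this, since it would require $\mu_b\ge b$.

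Instead take $N=(b-1)/(\mu_b-1)$, again attainable exactly by continuity, with $N\sim(\alpha-1)b$. Then apply Wang \& Wang to the shifted law on $[0,b-1]$. Since still $N\asymp b$, you get $V_N\asymp N^{4-\alpha}$, and the conclusion for $\alpha>4$ stands.
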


\noindent {\bf Proof}: The variance $V_N$ of $S_N$ under the type of coupling described before the statement of the Theorem can be calculated as
\begin{eqnarray}
V_N & = & {\mbox Var}[S_N] = E[{\mbox Var}[S_N | I_H]] + {\mbox Var}[E[S_N | I_H]] \nonumber \\
& = & 0 + P(W>b) V_N + N^2 P(W>b)P(W \le b)(E[W|W>b]-E[W|W \le b])^2 \nonumber \\
& = & N^2 P(W > b)(E[W|W>b]-E[W|W \le b])^2 \ . \label{Varbetweenwithin}
\end{eqnarray}

Since for exponential and finite-mean Pareto distributions ${{E[W|W>b]-E[W|W \le b]} \over b}$ and ${N \over b}$ have finite positive limits as $b \rightarrow \infty$, $V_N$ is asymptotically proportional to $N^4 P(W>N)$. The statement then follows.

\medskip

Keeping the focus of this study on the {\em maximization} of expected functions of LPP times, this detour to small LPP times is {\em almost} over and the task in the next section will be to show that the convex bounds developed above for LPP times in the complete $N$-partite graph apply verbatim to the commonly studied upward/right LPP times on $L_N$ and $P_{N.M}$. Before leaving the completely mixable distributions aside, the next subsection displays a scenario where the small variances achieved for minimal-mean LPP times extend to maximal-mean LPP times as well.

\medskip

\subsection{The variance of maximal-mean LPP time for exponential weights}

It has been shown in this section that there exist couplings of $Exp(1)$ weights with minimal mean LPP time $N$ and asymptotically vanishing variance. The maximal possible mean LPP time has been shown in Subsection \ref{illustratememoryless} to be $N + \log(N!)$. It is to be argued now that there exist couplings with maximal mean LPP time and asymptotically vanishing variance.

Build each anti-diagonal as in Theorem \ref{LPPmemoryless} but instead of using co-monotone (i.e., identical) uniform variables $U$ between diagonals, apply the copula developed in the current section, via completely mixable conditional distributions on $(0,x_0)$. The vanishing variance applies to maximal-mean couplings verbatim, since the LPP contribution of each anti-diagonal to maximal-mean couplings is a shift by a constant of the contribution to minimal-mean couplings.

It should be observed that this argument applies explicitly to the exponential scenario, made possible by the equal coefficients of the contributions over the various anti-diagonals.

\section{Flat random walks on $C_N$} \label{flatRW}

A {\em random walk} on $C_N$ or in $P_{N,M}$ is a stochastic process $B_1, B_2,\dots, B_N$ with initial state \linebreak $B_1=(1,1)$, such that for every $n$, $B_n$ is supported by the $n$'th anti-diagonal (or the pertinent section of it). The random walk is {\em flat} if for every $n$, the marginal distribution of $B_n$ is uniform on the $n$'th anti-diagonal (or the pertinent section). Four types of flat random walk will be singled out.

\medskip

\noindent {\bf The natural random walks} on $C_N$ or $P_{N,M}$ have independent coordinates $B_n$. Trajectories are supported by $C_N$ or by $P_{N,M}$.
\medskip

\noindent  {\bf The P\'{o}lya random walk} on $C_N$ is built as follows. $B_N$ is sampled from its uniform distribution, with ordinal value $O_N$ from $1$ to $N$. Consider an urn with $O_N-1$ "up" balls and $N-O_N$ "right" balls. Balls are successively drawn from the urn without replacement, until the urn is empty. The balls withdrawn describe, backwards from the diagonal to the origin, the steps of the random walk. Its trajectories are supported by $L_N$. The name P\'{o}lya was chosen because this random walk may be built forward, from the origin to the diagonal, by the P\'{o}lya urn scheme. In spite of the elegance of the P\'{o}lya urn scheme forward construction, the backward construction is more naturally generalized to obtain rectangle point-to-point random walks.

\medskip

\noindent {\bf The rectangle random walk} on $P_{N,M}$. Consider the rectangle with corners $(1,1), \linebreak (N-M+1,1), (1,M),(N-M+1,M)$, where ${{N+1} \over 2} \le M \le N$. Two sub-triangles are singled out in this rectangle, $C_{N-M+1}$ with the right angle at $(1,1)$ and its mirror image with the right angle at the opposite corner $(N-M+1,M)$. Choose a random vertex $W_{N-M+1}$ in the sub-diagonal from $(1,N-M+1)$ to $(N-M+1,1)$ and connect it vertically via "up" transitions to the sub-diagonal from $(1,M)$ to $(N-M+1,2 M-N)$. Complete the random walk by two independent samplings without replacement as in the P\'{o}lya random walk construction, to connect to the corners $(1,1)$ and $(N-M+1,M)$. The trajectories of this random walk are supported by $P_{N,M}$.

\medskip

\subsection{A family of couplings generated by flat random walks} \label{flatisunif}

Consider a flat random walk on any of the pertinent LPP graphs. Define
dependent weights $W(i,j)$ as follows, by means of the path chosen by the random walk and one realization $U \sim U(0,1)$, independent of the random walk. For $(i,j)$ along the path chosen by the random walk, let $W(i,j)$, distributed $F_R^{(i+j-1)}(\cdot)$, be
\begin{equation} \label{QuEfAr}
Q_{F_R^{(i+j-1)}}(U)=(1-F_R^{(i+j-1)})^{-1}(U)=(1-F)^{-1}({U \over {i+j-1}})
\end{equation}
and for $(i,j)$ not along the path, $W(i,j)$ is $Q_{F_L^{(i+j-1)}}(U)=(F_L^{(i+j-1)})^{-1}(U)$, distributed $F_L^{(i+j-1)}(\cdot)$.

Let $JD_B$ be the joint distribution of these weights for the flat random walk $B$. Let $LPP_B$ be the sum of weights along the path chosen by the random walk $B$. Equivalently, $LPP_B$ is the last passage percolation time on the collection of paths in the support of the distribution of $B$.

\begin{theorem} \label{theoremflat}
For every flat random walk $B$,
\begin{itemize}
    \item $JD_B$ is a $F$-coupling. I.e., each weight $W(i,j)$ has marginal distribution $F$.

    \item $LPP_B$ has the same distribution for all flat random walks $B$, given by Theorem \ref{theoremcomplete} for the natural flat random walks.

\end{itemize}
\end{theorem}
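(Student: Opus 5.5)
We prove the two claims separately. Both rest on one structural fact about the construction: the single uniform $U$ is independent of the random walk $B$, and each weight $W(i,j)$ is a deterministic function of $U$ and of the event $\{(i,j)\in B\}$. Write $n=i+j-1$ for the index of the anti-diagonal containing $(i,j)$, and $n_j$ for the number of admissible vertices on that anti-diagonal. For $L_N$ and $C_N$ we have $n_j=n$. For the rectangle walk on $P_{N,M}$, we read the superscript $i+j-1$ in (\ref{QuEfAr}) as the length $n_j$ of the pertinent section, so the bookkeeping is identical.

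\textbf{The marginals are $F$.} Fix $(i,j)$ on anti-diagonal $n$. By flatness, $P((i,j)\in B)=1/n$. Since $U$ is independent of $B$, the conditional law of $W(i,j)$ is:
\begin{itemize}
\item $F_R^{(n)}$ on the event $\{(i,j)\in B\}$, because $Q_{F_R^{(n)}}(U)\sim F_R^{(n)}$;
\item $F_L^{(n)}$ on the complementary event, because $(F_L^{(n)})^{-1}(U)\sim F_L^{(n)}$.
\end{itemize}
Hence the marginal cdf is the mixture $\frac1n F_R^{(n)}+\frac{n-1}{n}F_L^{(n)}$. Here $F_R^{(n)}$ and $F_L^{(n)}$ are the conditional laws of $F$ above and below the upper quantile $w_n=Q_F(1/n)$, and $F(w_n)=1-1/n$ by continuity and strict monotonicity of $F$. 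So this mixture is exactly the decomposition of $F$ at $w_n$, and it equals $F$. Equivalently, for $t\ge w_n$ the survival function is $\frac1n\cdot n F^*(t)=F^*(t)$, since $1-F_R^{(n)}(t)=nF^*(t)$ there. For $t<w_n$ the cdf is $\frac{n-1}{n}\cdot F(t)/(1-1/n)=F(t)$. This is the maximally dependent construction of Subsection \ref{Frechet}, randomized by $B$ instead of by an independent index.

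\textbf{The law of $LPP_B$ does not depend on $B$.} By definition, $LPP_B=\sum_{n=1}^N W(B_n)=\sum_{n=1}^N Q_F(U/n_j)$, using (\ref{QuEfAr}) on each visited vertex. This is a fixed function of $U$ alone, independent of which path $B$ selects. For point-to-line, it is exactly the expression (\ref{cbmodelptl}). For the rectangle walk the section lengths are $1,2,\dots,M$, then constant equal to $M$ for $N-2M$ steps, then $M,\dots,1$, which reproduces (\ref{cbmodelptp}). (The relabelling between $M$ and $N+1-M$ is immaterial by symmetry.)

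It remains to identify this with Theorem \ref{theoremcomplete} for the natural walk. For the natural walk, the weight at $B_n$ is the maximum over anti-diagonal $n$, since every off-path weight is $<w_n\le$ the on-path weight. The co-monotone sum of these maxima is precisely the coupling described in Theorem \ref{theoremcomplete}. So all flat walks give the common value, which is the convex bound $R_{C,N}^*$ (respectively $S_{C,N,M}^*$).

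One further check is that $LPP_B$, defined as the sum along the chosen path, also coincides with the maximum over all paths in the support of $B$. Every such path collects at most one on-path weight per anti-diagonal. Off-path weights are all below $w_n$, and the on-path weight is above it, so on each anti-diagonal a path does best by using the weight at $B_n$. The path of $B$ itself does this on every anti-diagonal simultaneously, so it is a geodesic.

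The main obstacle I expect is not analytic. It is making sure the indexing matches for the rectangle walk: that each section of the rectangle has the claimed length $n_j$, and that $F_R,F_L$ are taken with respect to $n_j$ rather than $i+j-1$. Once this convention is fixed, both claims reduce to the mixture identity above.
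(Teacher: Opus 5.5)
Your proposal is correct and follows the paper's own argument. The marginal of each weight is the $\tfrac1n$ versus $1-\tfrac1n$ mixture of $F_R^{(n)}$ and $F_L^{(n)}$, which is $F$. The walk's path is the geodesic, with weights $Q_F(U/n_j)$ identical to those of the natural walk. Your reading of the index in (\ref{QuEfAr}) as the section length $n_j$ for the rectangle walk is a clarification the paper leaves implicit, and it is needed for both claims to hold there.
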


\noindent {\bf Proof}. $JD_B$ are bona-fide $F$-couplings, because flat random walks have uniform marginals: In the $n$'th anti-diagonal, the mixing probabilities of the two conditional distributions are ${1 \over n}$ and $1-{1 \over n}$, as required.
The path chosen by the random walk is the unique geodesic -- the only path with all weights exceeding the upper quantile. The weights along this path are constructed in a standard way, the same as for the corresponding natural random walk, LPP on the complete $N$-partite graph.

\section*{Acknowledgement}

This study was motivated by a lecture by Riddhipratim Basu in SPA 2023 (Lisbon) in which Ofer Busani's work was quoted, and benefitted from subsequent correspondence with Basu and extensive interaction with Busani.
The author gratefully acknowledges funding by the ISRAEL SCIENCE FOUNDATION (grant No. 1898/21).


\begin{thebibliography}{99}

\bibitem{blairstahn} Blair-Stahn, N. D. (2010). First passage percolation and competition models. ArXiv: Probability, May 4, 2010.

\bibitem{Borodinetal} Borodin, A., Ferrari, P. L., Pr\"{a}hofer, aM. \& Sasamoto, T. Fluctuation
properties of the TASEP with periodic initial configuration. J. Stat.
Phys., 129:1055–1080, 2007.

\bibitem{Feller} Feller, W. (1966). {\em An introduction to Probability theory and its applications, 2}. Wiley, New York.

\bibitem{Gnedenko}  Gnedenko, B.V. (1943). Sur la distribution limite du terme maximum d'une serie al\'{e}atoire. {\em Annals of Mathematics}. {\bf 44 (3)}: 423–453.

\bibitem{Johansson} Johansson, K. (2000). Shape fluctuations and random matrices. {\em Comm. Math. Phys.}, {\bf 209.2}, 437--476.


\bibitem{LaiRobbins} Lai, T. L. \& Robbins, H. (1977). A class of dependent random variables and their maxima. {\em Z. Wahrscheinlichkeitstheorie verw. Gebiete} {\bf 42}, 89--111.

\bibitem{Meilijson} Meilijson, I. (1972). Limiting properties of the mean residual lifetime function. {\em Ann. Math. Statist.}, {\bf 43(1)}, 354--357.

\bibitem{MeilijsonNadas} Meilijson, I. and N\'{a}das, A. (1979). Convex majorization with an application to the length of critical paths. {\em J. Appl. Prob.}, {\bf 16(3)}, 671--677.

\bibitem{Rost} R\"{o}st, H. (1981). Nonequilibrium behaviour of a many particle process: Density profile and local equilibria. {\em Zeitschrift f. Warsch. Verw. Gebiete}, {\bf 58.1}, 41--53.

\bibitem{Ruschendorf} R\"{u}schendorf, L. and Uckelmann, L. (2002). Variance minimization and random variables with constant sum. In Distributions with Given Marginals and Statistical Modelling, pp. 211–222. Dordrecht: Kluwer Academic Publishers. MR2058994

\bibitem{Sasamoto} Sasamoto, T. (2005). Spatial correlations of the 1D KPZ surface on a flat substrate.
{\em J. Phys. A}, {\bf 38}, 549–-556.

\bibitem{Shaked} Shaked, M. and Shanthikumar, J. G. (2007). {\em Stochastic orders}. Springer.

\bibitem{Wang} Wang, B. and Wang, R. (2011). The complete mixability and convex minimization problems with monotone marginal densities. {\em Journal of Multivariate Analysis}, {\bf 102 (10)},1344--1360.


\end{thebibliography}
\end{document}